\newcommand{\Z}{\mathbb{Z}}
\newcommand{\Q}{\mathbb{Q}}
\newcommand{\lie}{\mathfrak}
\newcommand{\ds}{\lie{ds}}
\newcommand{\nfz}{\lie{nfz}}
\newcommand{\grt}{\lie{grt}}
\newcommand{\FZ}{\mathcal{F\!Z}}
\DeclareMathOperator{\Ua}{\mathcal{U}}
\newcommand{\Uds}{\Ua\ds}
\DeclareMathOperator{\SL}{SL}
\DeclareMathOperator{\Lie}{Lie}
\newcommand{\Lieb}{\mathbb{L}}
\newcommand{\scal}[2]{( #1 \mid #2 )}
\newcommand{\chev}[1]{\langle #1 \rangle}
\DeclareMathOperator{\ad}{ad}
\DeclareMathOperator{\id}{id}
\DeclareMathOperator{\Ker}{Ker}
\newcommand{\transp}[1]{{}^{t}\!#1}
\DeclareMathOperator{\diag}{diag}
\renewcommand{\le}{\leqslant}
\renewcommand{\ge}{\geqslant}
\newcommand{\sha}{\mathrel{\text{\foreignlanguage{russian}{ш}}}}
\theoremstyle{definition}
\newtheorem*{defi}{Definition}
\newtheorem*{ex}{Example}
\theoremstyle{remark}
\newtheorem*{rem}{Remark}
\theoremstyle{plain}
\newtheorem{thm}{Theorem}[section]
\newtheorem{prop}[thm]{Proposition}
\newtheorem*{claim}{Claim}
\newtheorem*{coro}{Corollary}
\title[Period polynomial relations between double zeta values]{Period polynomial relations\\ between double zeta values}
\author[S. Baumard and L. Schneps]{Samuel Baumard and Leila Schneps}
\date{June 2011}
\begin{document}

\maketitle

\begin{abstract}
 The even weight period polynomial relations in the double shuffle Lie algebra~$\ds$ were discovered by Ihara, and completely classified in~\cite{Schn} by relating them to restricted even period polynomials associated to cusp forms on~$\SL_2(\Z)$. In an article published in the same year, Gangl, Kaneko and~Zagier~\cite{GKZ} displayed certain linear combinations of odd-component double zeta values which are equal to scalar multiples of simple zeta values in even weight, and also related them to restricted even period polynomials. In this paper, we relate the two sets of relations, showing how they can be deduced from each other by duality.
\end{abstract}

\medskip
\section{Introduction and background}\label{section1}

We recall the definition of the even period polynomials associated to cusp forms on~$\SL_2(\Z)$ of even weight~$k$, and the associated space~$E_k$ of the restricted period polynomials. We also recall the isomorphisms between~$E_k$ and~a set of special elements of the double shuffle Lie algebra~$\ds$, and between~$E_k$ and~a set of particular relations between double zeta values in the formal double zeta space. Based on these results, the goal of the paper is to show how to deduce each of these isomorphisms explicitly and directly from the other by duality.

\subsection{Restricted period polynomials associated to cusp forms on~$\SL_2(\Z)$}

\addtocounter{footnote}{1}
\begin{defi} The period polynomials in weight~$k$ are given by
\[ r(X)=\int_{0}^{\infty} f(z)\,(z-X)^{k-2}\,\mathrm{d}z \]
for cusp forms~$f(z)\in S_k(\SL_2(\Z))$. The even period polynomials~$r^-(X)$ are obtained from these by
\[ r^-(X)=\frac{r(X)+r(-X)}{2}\text{.}\]

\medskip\noindent
Finally, the \emph{restricted even period polynomials}~$p(X)$ are obtained from~$r^-(X)$ by subtracting off the term of top degree~$X^{k-2}$ and the constant term. These polynomials, homogenized by an additional variable~$Y$ to degree~$k-2$, form~a vector space which we denote by~$E_k$.
\end{defi}

By the Eichler-Shimura correspondence, the map
\[S_k({\rm SL}_2(\Z))\rightarrow E_k\]
induced by the above definition is an isomorphism. It is easy to deduce from the work of Zagier~\cite{ZagInv,ZagCoh} on period polynomials that~a polynomial~$P(X)$ lies in~$E_k$ if and only if it is even of degree~$\le k-4$, without constant term, and satisfies the relations~$P(X)+X^{k-2}P\bigl(\frac{1}{X}\bigr)=0$ and
\[P(X)+X^{k-2}P\bigl(1-\tfrac{1}{X}\bigr)+(X-1)^{k-2}P\bigl(\tfrac{1}{1-X}\bigr)=0\text{.}\] 

\pagebreak[3]
The first restricted even period polynomials, in weights~$12$ and~$16$, are 

\begin{equation}\label{polysper}
 \begin{cases}\phantom{\text{and}\quad 2\,} (X^8-X^2)-3\,(X^6-X^4)  &   \\
\text{and}\quad 2\,(X^{12}-X^2)-7\,(X^{10}-X^4)+11\,(X^8-X^6)\text{.}  &  
\end{cases}
\end{equation}

\subsection{The double shuffle Lie algebra and period polynomial elements}\label{subsection2}

Let~$\Lie[x,y]$ denote the free Lie algebra on two generators, and for each~$n\ge 1$, let~$\Lie_n[x,y]$ denote the subvector space of homogeneous Lie polynomials of degree~$n$. Let~$\Lie_{\ge n}[x,y]$ denote the Lie algebra of Lie polynomials all of whose monomials are of degree greater than or equal to~$n$, i.e.~$\Lie_{\ge n}[x,y]=\bigoplus_{m\ge n}\Lie_m[x,y]$.

For~a polynomial~$f$ in~$x$ and~$y$ and any word~$w$ in~$x$ and~$y$, let~$\scal{f}{w}$ denote the coefficient of the word~$w$ in~$f$, and extend this notation to~$\scal{f}{g}$ for~$g\in \Q\langle x,y\rangle$ by right linearity.

Let~$\ds$ denote the double shuffle Lie algebra defined as follows:
\[\ds=\{f\in \Lie_{\ge 3}[x,y] \mid  \scal{f}{u*v}=0\}
\]
where~$u*v$ represents the stuffle product of all nontrivial words~$u$ and~$v$ in~$x$ and~$y$ both ending in~$y$\footnote{Let~$y_i=x^{i-1}y$, so that all words ending in~$y$ can be written as~$y_{i_1}\cdots y_{i_r}$. The stuffle product of two such words is defined recursively by~$1*w=w*1=w$ and~$y_iw*y_jw'=y_i\,(w*y_jw')+y_j\,(y_iw*w')+y_{i+j}\,(w*w')$.} and not both powers of~$y$.

For every~$f\in \Q\chev{x,y}$, let~$D_f$ be the associated derivation of~$\Q\chev{x,y}$ defined on the generators by~$D_f(x)=0$ and~$D_g(y)=[y,f]$. The Poisson bracket 
\[\{f,g\}=[f,g]+D_f(g)-D_g(f)\]
arises naturally from bracketing derivations, since as is easily verified, we have
\[[D_f,D_g]=D_f\circ D_g-D_g\circ D_f=D_{\{f,g\}}\text{.}\]

The following result was shown independently and using quite different methods by Goncharov~\cite{GonG} (who displayed an explicit cobracket on the dual space), Racinet~\cite{Rac} (who gave~a direct proof), and \'Ecalle~\cite{Eca} (who embedded the whole situation in~a much vaster theory).

\begin{prop}
The vector space~$\ds$ is~a Lie algebra under the Poisson bracket.
\end{prop}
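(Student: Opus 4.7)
The plan is to split the statement into two parts: to verify that the Poisson bracket defines a Lie algebra structure on the ambient space $\Lie_{\ge 3}[x,y]$, and then that~$\ds$ is stable under it.

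Antisymmetry $\{f,g\}=-\{g,f\}$ is immediate from the formula. For the Jacobi identity, I would exploit the relation $[D_f,D_g]=D_{\{f,g\}}$ recalled above: it says that $f\mapsto D_f$ intertwines the Poisson bracket with the commutator of derivations. Jacobi in $\mathrm{End}(\Q\chev{x,y})$ then forces the Poisson Jacobiator of $f$, $g$,~$h$ to lie in the kernel of $f\mapsto D_f$, which is the centralizer of~$y$ in $\Lie[x,y]$, namely $\Q\cdot y$. Since the Jacobiator of three elements of $\Lie_{\ge 3}[x,y]$ has degree at least~$9$, it must vanish. Stability of $\Lie_{\ge 3}[x,y]$ under $\{\cdot,\cdot\}$ is equally easy: $D_f$ preserves Lie polynomials because it sends the generators to Lie polynomials in~$f$, and the degree identity $\deg\{f,g\}=\deg f+\deg g$ gives $\{f,g\}\in\Lie_{\ge 6}[x,y]$.

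The main obstacle is showing that the stuffle condition defining~$\ds$ is preserved. I would recast it coalgebraically: on the subspace of $\Q\chev{x,y}$ spanned by words ending in~$y$ and the empty word, there is a coproduct $\Delta_*$ dual to the stuffle product, satisfying $\scal{h}{u*v}=\scal{\Delta_* h}{u\tens v}$. The stuffle relations on $f\in\ds$ then amount to saying that $\Delta_*f-f\tens 1-1\tens f$ is supported on tensors $u\tens v$ where both~$u$ and~$v$ are nontrivial powers of~$y$. The goal is then to show that $\Delta_*\{f,g\}$ satisfies the same condition when~$f$ and~$g$ do.

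This last verification is where the design of the Poisson bracket proves essential: $\Delta_*$ is not compatible with ordinary concatenation, so $\Delta_*[f,g]$ alone does not stay in the correct subspace. The correction terms $D_f(g)-D_g(f)$ are constructed precisely so that the defect cancels, the key input being that the derivation~$D_f$ interacts in a controlled way with the pure-$y$-power components which carry the stuffle discrepancy. Carrying out this cancellation, by induction on word length using the recursive definition of~$*$, is the computational heart of the argument; this is the content of Racinet's direct proof, and is reached by different means in Goncharov's and~\'Ecalle's works.
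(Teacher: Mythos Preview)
The paper does not prove this proposition: it is stated as a known result and attributed to Goncharov, Racinet, and \'Ecalle, with no argument supplied. Your proposal is therefore not competing with a proof in the paper but rather sketching what such a proof would involve, and in the end you too defer the hard step---stability of the stuffle condition under the Poisson bracket---to Racinet's direct computation. In that sense your treatment and the paper's are aligned: both point to the same literature.

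The part you do argue in detail, namely that the Poisson bracket makes $\Lie_{\ge 3}[x,y]$ into a Lie algebra, is correct and nicely done. Using $[D_f,D_g]=D_{\{f,g\}}$ to push the Jacobiator into $\ker(f\mapsto D_f)=\Q\cdot y$ and then killing it by degree is clean. Your coalgebraic reformulation of the stuffle condition via $\Delta_*$ is also the right way to set up the stability argument, and your remark that the $D_f(g)-D_g(f)$ terms are there precisely to repair the failure of $\Delta_*$ to be compatible with concatenation is the correct intuition. But as you acknowledge, the actual cancellation is where all the work lies, and you have not carried it out---so this remains a proof outline rather than a proof, just as in the paper.
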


The Lie algebra~$\ds$ is graded by the \emph{weight}, which is the homogeneous degree of~a Lie polynomial. We write~$\ds_n$ for the weight~$n$ graded part. The full algebra~$\ds$ and each of its graded parts~$\ds_n$ are filtered by the \emph{depth}, which is defined to be the smallest number~$d$ such that an element~$f\in \ds$ contains~a monomial (with non-zero coefficient) having~$d$ letters~$y$. This depth filtration is decreasing on~$\ds$ and on each subspace~$\ds_n$: one has~a decreasing sequence~$\ds_{n}^{1}\supset \ds_{n}^{2}\supset \ds_{n}^{3}\supset\cdots$. 

The depth filtration is not~a grading, due to the existence of linear combinations of depth~$d$ elements whose depth is strictly greater than~$d$. The most studied of these relations, which conjecturally generate the Lie ideal of all such relations, are the \emph{period polynomial relations} which we now proceed to define.

\medskip
Various closely related proofs (essentially based on using the Drinfel'd associator) have been given for the following result (cf. \cite{ZagKyo,Eca,GonG,Rac}).

\begin{prop} For every odd~$n\ge 3$, there exists~a non-zero element~$f\in \ds_n$ of depth~$1$.
\end{prop}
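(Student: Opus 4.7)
My approach is to produce the required element over~$\mathbb{C}$ from the Drinfel'd (KZ) associator~$\Phi_{KZ}\in\mathbb{C}\langle\langle x,y\rangle\rangle$ and then descend to~$\mathbb{Q}$ by a short linear algebra argument. The key input is the theorem of Racinet~\cite{Rac} (with alternative proofs by Écalle~\cite{Eca} and Goncharov~\cite{GonG}) that the Lie series~$\phi:=\log\Phi_{KZ}$, after a harmless weight-$2$ correction accounting for~$\zeta(2)$, lies in the completed double shuffle Lie algebra~$\widehat{\ds}\otimes_\mathbb{Q}\mathbb{C}$. Writing~$\phi_n$ for its weight-$n$ homogeneous component, we thus obtain~$\phi_n\in\ds_n\otimes_\mathbb{Q}\mathbb{C}$ for every~$n\geq 3$.

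Next, I would compute the depth-$1$ part of~$\phi_n$. The coefficient of the depth-$1$ word~$x^{n-1}y$ in~$\Phi_{KZ}$ is the zeta value~$\zeta(n)$. Since~$\Phi_{KZ}$ has no non-trivial coefficients supported on pure powers of~$x$, any non-trivial factorization of~$x^{n-1}y$ appearing in~$(\Phi_{KZ}-1)^k$ for~$k\geq 2$ would require at least one factor that is a pure~$x$-power, and hence contributes zero; so~$\scal{\phi_n}{x^{n-1}y}=\zeta(n)$. Now the space of Lie polynomials of depth~$1$ in~$\Lie_n[x,y]$ is one-dimensional and spanned by~$\ad(x)^{n-1}(y)$ (in which the word~$x^{n-1}y$ appears with coefficient~$1$); therefore the depth-$1$ part of~$\phi_n$ equals~$\zeta(n)\cdot \ad(x)^{n-1}(y)$, which is non-zero for every odd~$n\geq 3$ since~$\zeta(n)\neq 0$.

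Finally, a descent argument: since~$\ds_n$ is cut out of~$\Lie_n[x,y]$ by~$\mathbb{Q}$-linear relations, the functional~$\scal{\cdot}{x^{n-1}y}\colon\ds_n\to\mathbb{Q}$ extends by~$\mathbb{C}$-linearity to~$\ds_n\otimes_\mathbb{Q}\mathbb{C}\to\mathbb{C}$, where it takes the non-zero value~$\zeta(n)$ on~$\phi_n$. Hence this functional is already non-zero on~$\ds_n$, giving an element~$f\in\ds_n$ with~$\scal{f}{x^{n-1}y}\neq 0$; such an~$f$ automatically has depth~$1$, depth~$0$ being impossible in~$\Lie_{\geq 3}[x,y]$. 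The main obstacle is the invocation of Racinet's theorem, which itself rests on the pentagon and hexagon identities for~$\Phi_{KZ}$ together with a careful comparison of shuffle- and stuffle-regularised multiple zeta values; the remainder of the argument is straightforward bookkeeping about leading coefficients.
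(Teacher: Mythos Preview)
Your proposal is correct and is precisely the Drinfel'd-associator approach that the paper itself invokes: the paper does not give its own proof of this proposition but simply cites \cite{ZagKyo,Eca,GonG,Rac}, noting that the known proofs are ``essentially based on using the Drinfel'd associator.'' Your sketch fills in exactly that outline---Racinet's theorem places $\log\Phi_{KZ}$ (modulo the $\zeta(2)$ correction) in $\widehat{\ds}\otimes\mathbb{C}$, the vanishing $\Phi_{KZ}(x,0)=1$ forces the depth-$1$ coefficient of $\phi_n$ to be $\zeta(n)$, and the $\mathbb{Q}$-linear descent is immediate---so there is nothing to compare beyond observing that you have supplied the argument the paper leaves to its references.
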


Thanks to this result, we now fix~a choice of depth~$1$ elements~$f_n\in \ds_n$ for each odd~$n\ge 3$ once and for all. The results we prove below are independent of this choice.

\begin{defi} For every even weight~$k$, the \emph{period polynomial} elements in~$\ds_k^2$ are linear combinations~$P$ of Poisson brackets of~$f_n$ and~$f_{k-n}$ such that
\begin{equation}\label{eqpp}
 P=\sum_{\substack{n=3 \\ n\text{ odd}}}^{k-3} a_{n}\,\{f_n,f_{k-n}\}\equiv 0\ \pmod{\ds_k^3}\text{.}
\end{equation}
\end{defi}

The depth~$2$ part of~$P$ comes only from the Poisson brackets of the depth~$1$ parts of the~$f_n$; thus the existence of such relations only depends on these depth~$1$ parts (cf.~\cite{Schn}). Up to scalar multiple, the depth~$1$ part of~$f_n$ is equal to the Lie word~$\ad_{x}^{n-1}(y)$. Thus, the coefficients~$a_n$ appearing in the period polynomial elements are independent of the choice of depth~$1$ elements~$f_n$. In this paper we assume that the~$f_n$ are normalized so that the coefficient of~$\ad_{x}^{n-1}(y)$ is equal to~$1$. 

Working in~a slightly different context (that of the stable derivation Lie algebra, also known as the Grothendieck-Teichm\"uller Lie algebra~$\grt$ and consisting of polynomials with integer coefficients), Ihara~\cite{IhaG} first discovered the existence of elements such as~(\ref{eqpp}), which are zero modulo depth~$3$. He gave the first such element, which occurs in weight~$12$, as 
\[2\,\{f_3,f_9\}-27\,\{f_5,f_7\}\equiv 0 \pmod{\grt_{12}^3}\text{.}\]
Here, he normalized the chosen~$f_n$ by taking them with relatively prime integer coefficients. Ihara also discovered~a similar relation in weight 16, and together with Takao~\cite{IhaG}, proved that the dimension of the space of such relations in even weight~$k$ is given by
\[\bigl[ \tfrac{k-4}{4} \bigr] - \bigl[\tfrac{k-2}{6}\bigr]\text{,}\]

\medskip\noindent
which is exactly the dimension of the space~$S_k(\SL_2(\Z))$ of cusp forms of weight~$k$ on~$\SL_2(\Z)$ (this result was also proven in~\cite{Gond}). As indicated in~\cite{Schn}, these special elements depend only on the depth~$1$ terms~$\ad_{x}^{n-1}(y)$ and are thus entirely independent of the definition of the stable derivation algebra~$\grt$. In fact, the same linear combinations will have the property of being zero modulo depth~$3$ in the Lie algebra~$\Lieb[x,y]$ whose underlying vector space is the same as~$\Lie[x,y]$ but which is equipped with the Poisson bracket, and in any Lie subalgebra of~$\Lieb[x,y]$ having~a depth~$1$ element for each odd~$n\ge 3$, so in both~$\grt$ and~$\ds$. 

If the chosen~$f_n$ are normalized by taking the coefficient of the Lie word~$\ad_{x}^{n-1}(y)$ to be equal to~$1$, then Ihara's relations in weights~$12$ and~$16$ become
\begin{equation}\label{relatsper}\begin{cases}
&\hfill\{f_3,f_9\}-3\,\{f_5,f_7\}\equiv 0\pmod{\ds_{12}^3}\phantom{\text{.}}   \\
\text{and}&2\,\{f_3,f_{13}\}-7\,\{f_5,f_{11}\}+11\,\{f_7,f_9\}\equiv 0\pmod{\ds_{16}^3}\text{.}  
\end{cases}
\end{equation}

The main result of~\cite{Schn} proves the result suggested by comparison of~(\ref{polysper}) and~(\ref{relatsper}), namely:

\begin{thm}[\cite{Schn}]\label{thmschn}Let~$L$ denote any sub-Lie algebra of~$\Lieb$ containing an element~$f_n$ in each weight~$n$ satisfying~$\scal{f_n }{  x^{n-1}y}=1$. Then for each even~$k$, the following conditions are equivalent:
 \begin{enumerate}[(i)]
  \item~$\displaystyle \sum_{i=1}^{[\frac{k-4}{4}]} a_i\,\{f_{2i+1},f_{k-2i-1}\}\equiv 0\pmod{L_{k}^{3}}~$ ;
  \item~$\displaystyle P(X,Y)=\sum_{i=1}^{[\frac{k-4}{4}]} a_i\, (X^{2i}Y^{k-2-2i}-
X^{k-2i-2}Y^{2i})\in E_k~$. 
 \end{enumerate}
\end{thm}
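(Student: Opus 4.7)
The plan is to reduce Theorem~\ref{thmschn} to an explicit calculation in the ``depth-$2$-modulo-depth-$3$'' part of the free Lie algebra, by linearizing Poisson brackets of depth-$1$ elements into polynomial expressions and matching the resulting vanishing condition with the defining relations of~$E_k$.

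First, I would observe that the statement depends only on the depth-$1$ parts of the~$f_n$. Writing $f_n=\ad_x^{n-1}(y)+g_n$ with $g_n$ of depth~$\ge 2$, each of the three summands $[f_n,f_m]$, $D_{f_n}(f_m)$, $-D_{f_m}(f_n)$ defining $\{f_n,f_m\}$ is bilinear, and any term involving $g_n$ or $g_m$ has depth~$\ge 3$. Hence modulo~$L_k^3$ one may replace each $f_n$ by $\ad_x^{n-1}(y)$, and condition~(i) reduces to a statement about the free Lie algebra $\Lieb[x,y]$ alone, independent of the ambient sub-Lie algebra~$L$.

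Next, I would introduce the linearization
\[
\Phi \colon \Q[X,Y]_{k-2} \longrightarrow \Lie_k[x,y]^2/\Lie_k[x,y]^3, \qquad \Phi(X^a Y^b) = \{\ad_x^{a}(y),\ad_x^{b}(y)\} \bmod \Lie_k[x,y]^3.
\]
Antisymmetry of the Poisson bracket gives $\Phi(X^aY^b+X^bY^a)=0$, so $\Phi$ factors through antisymmetric polynomials. Under this map, the polynomial~$P(X,Y)$ of~(ii) is sent to $2\sum_i a_i\{f_{2i+1},f_{k-2i-1}\}\bmod L_k^3$, so condition~(i) is exactly the equation $\Phi(P)=0$. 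The theorem is therefore equivalent to the identification $\ker\Phi = E_k$ inside the subspace of antisymmetric polynomials of degree~$k-2$ with only even exponents and no constant or top-degree coefficient---conditions which are all automatic for the specific~$P$ of~(ii) since $1 \le i \le [(k-4)/4]$.

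To prove $\ker\Phi=E_k$, I would expand $\ad_x^{n-1}(y)=\sum_{i}(-1)^i\binom{n-1}{i}x^{n-1-i}yx^i$ and compute the coefficients of the depth-$2$ words $x^a y x^b y x^c$ in $\{\ad_x^{n-1}(y),\ad_x^{m-1}(y)\}$ contributed by the bracket and by both derivation terms. The vanishing of a linear combination modulo depth~$3$ then becomes a system of linear equations in the~$a_i$; reorganizing these equations via the Jacobi identity in depth~$2$ would yield exactly the homogenized three-term cyclic relation defining~$E_k$. The principal obstacle is precisely this combinatorial reorganization: the depth-$2$ coefficients mix contributions from all three parts of the Poisson bracket in a nontrivial way, and the key technical step is recognizing that the resulting system collapses to the homogenized three-term period polynomial relation.
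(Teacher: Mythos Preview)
The paper does not prove Theorem~\ref{thmschn}; it is quoted from~\cite{Schn} as a known input (see the sentence ``The main result of~\cite{Schn} proves\ldots'' preceding the statement, and the phrase ``It follows from~\cite{Schn}'' inside the proof of Proposition~\ref{thmiso}). There is therefore no proof in this paper to compare your attempt against.

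On the merits of your outline: the reduction to depth-$1$ parts is correct and is exactly what the paper remarks informally after~(\ref{eqpp}). Since the depth filtration on~$L$ is inherited from~$\Lieb$, an element of~$L_k$ lies in~$L_k^3$ if and only if it lies in~$\Lieb_k^3$, so condition~(i) is indeed intrinsic to~$\Lieb$. Your map~$\Phi$ and the identification of~(i) with $\Phi(P)=0$ are also correct.

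The genuine gap is that you stop exactly where the content of the theorem begins. You assert that ``reorganizing these equations via the Jacobi identity in depth~$2$ would yield exactly the homogenized three-term cyclic relation defining~$E_k$,'' and then label this the ``principal obstacle'' without doing it. But this \emph{is} the theorem: one must show that the kernel of the explicit linear map~$\Phi$, restricted to antisymmetric even polynomials with no $X^0$ or $X^{k-2}$ term, coincides with the space cut out by the two defining relations of~$E_k$. Nothing in your write-up constrains $\ker\Phi$; you have a setup, not a proof. If you want a foothold in this paper's notation, the proof of Proposition~\ref{thmiso} computes the relevant pairing as
\[
A_{ij}=\binom{2j}{2i}-\binom{2j}{k-2-2i}+\delta_{i+j}^{\frac{k-2}{2}},
\]
and the argument in~\cite{Schn} amounts to identifying the antisymmetric part of $\Ker A$ with~$E_k$; that identification is what you still need to supply.
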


\subsection{Period polynomial relations between double zeta values}

The \emph{multiple zeta values} are defined by the series
\begin{equation}\label{eqmzv}
 \zeta(r_1,\ldots,r_k)=\sum_{n_1>\cdots >n_k>0} \frac{1}{n_1^{r_1}
\cdots n_k^{r_k}}
\end{equation}
where the~$r_i$ are strictly positive integers with~$r_1>1$ to ensure convergence. The \emph{double zeta values} are the values of the convergent series
\[\zeta(r,s)=\sum_{m>n>0} \frac{1}{m^rn^s}\]
whenever~$r$ and~$s$ are positive integers with~$r>1$. In the paper~\cite{GKZ}, Gangl, Kaneko and Zagier exhibit for even~$k\ge 12$ some particular linear combinations of double zeta values~$\zeta(r,s)$ with odd entries~$r$ and~$s$ such that~$r+s=k$, which have the property of being equal to~a scalar multiple of~$\zeta(k)=\sum_{n>0}\frac{1}{n^k}$, and which also arise from period polynomials. 

More precisely, the result of~\cite{GKZ} on period polynomials and odd-component double zetas which concerns us here can be expressed as follows. 

\begin{thm}[\cite{GKZ}]\label{thmgkz}
Let~$k\ge 12$ be even. Let~$P(X,Y)\in E_k$ and set
\begin{equation}\label{changevar}P(X+Y,Y)=\sum_{r=1}^{k-3} \binom{k-2}{r-1}\,q_{r,k-r}\,X^{r-1}Y^{k-r-1}\text{.}\end{equation}
Then 
\begin{equation}\label{eqgkz}
\sum_{\substack{r=3 \\ r\text{ odd}}}^{k-3} q_{r,k-r}\, \zeta(r,k-r)\equiv 0\pmod{\zeta(k)}\text{.}
\end{equation}
Conversely, the weight~$k$ even polynomials~$Q$ giving rise to a relation of the type~(\ref{eqgkz}) are those for which the polynomial~$P$ defined by~(\ref{changevar}) is a period polynomial.
\end{thm}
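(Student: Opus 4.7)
The plan is to derive Theorem~\ref{thmgkz} from Theorem~\ref{thmschn} by means of an explicit duality between the depth-$2$ part of the period polynomial elements in $\ds_k$ and formal double zeta relations modulo $\zeta(k)$. Starting with $P(X,Y)\in E_k$ of the form $\sum_i a_i(X^{2i}Y^{k-2-2i}-X^{k-2i-2}Y^{2i})$, Theorem~\ref{thmschn} provides the congruence $\sum_i a_i\{f_{2i+1},f_{k-2i-1}\}\equiv 0\pmod{\ds_k^3}$. Because the depth-$2$ part of a Poisson bracket $\{f_m,f_n\}$ only sees the depth-$1$ parts $e_n=\ad_x^{n-1}(y)$ of the chosen elements, this congruence is equivalent to the identity $\sum_i a_i\{e_{2i+1},e_{k-2i-1}\}=0$ in the depth-$2$ part of $\Lie_k[x,y]$.

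The first concrete step is to compute the word-coefficients $\scal{\{e_m,e_n\}}{x^{r-1}yx^{s-1}y}$ for $r+s=k$ in closed form. This is a direct unravelling of the Poisson bracket formula $\{e_m,e_n\}=[e_m,e_n]+D_{e_m}(e_n)-D_{e_n}(e_m)$, combined with the description of $D_f$ as the derivation fixing $x$ and sending $y$ to $[y,f]$, and with the signed-binomial expansion $e_n=\sum_{i=0}^{n-1}(-1)^{i}\binom{n-1}{i}x^{n-1-i}yx^{i}$. Assembling the resulting coefficients of $\sum_i a_i\{e_{2i+1},e_{k-2i-1}\}$ into the bivariate polynomial
\[\Phi(X,Y)=\sum_{r+s=k}\scal{\sum_i a_i\{e_{2i+1},e_{k-2i-1}\}}{x^{r-1}yx^{s-1}y}\,X^{r-1}Y^{s-1}\text{,}\]
I would then show that $\Phi$ is related to $P$ via the linear change of variables $X\mapsto X+Y$, up to the binomial factors $\binom{k-2}{r-1}$ occurring in~\eqref{changevar}.

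Next, I would identify the word $x^{r-1}yx^{s-1}y$ with the formal double zeta $\zeta(r,s)$ and interpret the vanishing of $\Phi$ as a linear relation among the $\zeta(r,s)$. Using the weight-$k$ double shuffle relations --- namely the stuffle $\zeta(r)\zeta(s)=\zeta(r,s)+\zeta(s,r)+\zeta(r+s)$ and the shuffle $\zeta(r)\zeta(s)=\sum_a \bigl(\binom{a-1}{r-1}+\binom{a-1}{s-1}\bigr)\zeta(a,r+s-a)$ --- one can eliminate every $\zeta(r,k-r)$ with $r$ \emph{even} in favour of a combination of $\zeta(k)$ and of the $\zeta(r',k-r')$ with $r'$ odd. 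The resulting odd-component relation, after collecting binomial factors, is exactly~\eqref{eqgkz}. The whole argument is reversible: starting from an odd-component relation of type~\eqref{eqgkz}, the same dictionary reconstructs a period polynomial element in $\ds_k^2$, and then the converse direction of Theorem~\ref{thmschn} shows that the associated $P$ lies in $E_k$.

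The principal obstacle is purely combinatorial: one must verify that the binomial coefficients arising in the word-expansion of $\{e_m,e_n\}$, once pushed through the double shuffle reduction that converts the relation on all $\zeta(r,k-r)$ into an odd-component one modulo $\zeta(k)$, reassemble exactly into the weights $\binom{k-2}{r-1}q_{r,k-r}$ of~\eqref{changevar}. This is the content of the single binomial identity that underlies the substitution $X\mapsto X+Y$, and making it explicit is where the bulk of the work lies.
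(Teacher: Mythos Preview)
There is a genuine gap in the proposal, and it sits exactly at the passage from the $\ds$ side to the zeta side.

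Your polynomial $\Phi(X,Y)$ has as coefficients the depth-$2$ word-coefficients of the Lie element $F=\sum_i a_i\{e_{2i+1},e_{k-2i-1}\}$. By Theorem~\ref{thmschn}, when $P\in E_k$ this element $F$ is \emph{zero} modulo depth~$3$; hence every coefficient of $\Phi$ is zero and $\Phi$ is the zero polynomial. It therefore cannot be ``related to $P(X+Y,Y)$'' in any nontrivial sense, and the subsequent step ``interpret the vanishing of $\Phi$ as a linear relation among the $\zeta(r,s)$'' produces only the trivial relation $0=0$. Pushing that through the double shuffle reduction of even-component double zetas still yields $0=0$. What has gone wrong is a confusion between the two sides of a duality: a linear combination of the Lie elements $g_j$ (or of the brackets) that vanishes modulo depth~$3$ lies in the kernel of the pairing matrix $A=(\scal{Z_i}{g_j})$, whereas a genuine relation $\sum_r q_r\,\zeta(r,k-r)\equiv 0\pmod{\zeta(k)}$ is a vector in the kernel of the \emph{transpose} $\transp A$. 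These two kernels have the same dimension but there is no reason a priori for a vector in one to give a vector in the other, and your argument never crosses this bridge.

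The paper's proof supplies precisely the missing link. It works not with the Poisson brackets but with the $\Uds$-products $g_j=f_{2j+1}\odot f_{k-2j-1}$, pairs them against the odd-component formal zetas $Z_i=Z(2i+1,k-2i-1)$ only (these already span $\overline{\FZ}_k^2$ by a result of~\cite{GKZ}), and obtains a square matrix $A$ with $A_{ij}=\binom{2j}{2i}-\binom{2j}{k-2-2i}+\delta_{i+j}^{(k-2)/2}$. The crucial computation is then that $\transp A\,D\,B$ is \emph{symmetric}, where $D^{-1}=\diag\binom{k-2}{2i}$ and $B_{ij}=\binom{2j}{2i}$; this gives $\Ker\transp A=DB\cdot\Ker A$, and one checks directly that left multiplication by $DB$ is exactly the passage $(a_i)\mapsto(q_{2j+1,k-2j-1})$ induced by the substitution $X\mapsto X+Y$ in~\eqref{changevar}. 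So the binomial identity you anticipate does exist, but it is a statement about the symmetry of $\transp ADB$, not about a direct equality between $\Phi$ and $P(X+Y,Y)$; and it relates $\Ker A$ to $\Ker\transp A$, not a Lie-side relation to a zeta-side relation under the naive identification $x^{r-1}yx^{s-1}y\leftrightarrow\zeta(r,s)$.
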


We observe that this statement does not appear exactly in this form in~\cite{GKZ}; instead, they give~a rough version in theorem 3 of the introduction (``The values~$\zeta(\mathrm{od},\mathrm{od})$ of weight~$k$ satisfy at least~$\dim S_k$ linearly independent relations, where~$S_k$ denotes the space of cusp forms of weight~$k$ on~$\SL_2(\Z)$''), and~a more refined version in the body of the article, in which both even and odd component double zetas are studied in~a ``formal zeta space''~$D_k$. The result of theorem~\ref{thmgkz} is indicated for real multizeta values in remark 2 following Theorem 3 of~\cite{GKZ}. That remark works in exactly the same way space~$\mathcal{FZ}$ of formal multizeta values which we define in the following section (and which is not the same as their space of ``formal double zetas'', even in depth 2). 

The first GKZ relation occurs in weight~$k=12$ and is given by
\begin{equation}\label{relgkz12}
 28\,\zeta(9,3)+150\,\zeta(7,5)+168\,\zeta(5,7)=\frac{5197}{691}\,\zeta(12)\text{.}
\end{equation}

In the remarks ending the introduction of~\cite{GKZ}, several questions are raised concerning the relation between their results and the period polynomial results of~\cite{Schn}, and the relations observed by Ihara in the stable derivation Lie algebra. The present article provides some of these connections; the main result in particular (proposition~\ref{thmiso} and the final corollary) shows how to explicitly deduce the existence and coefficients of the GKZ relations of theorem~\ref{thmgkz} (in both real and formal zetas) from the existence of the period polynomial elements in~$\ds$ and vice versa.

\section{Universal enveloping algebra and duality}\label{sectionUds}

In order to work simultaneously on the level of multiple zeta values and double shuffle, we place ourselves within the following extremely useful diagram which simultaneously shows all the levels and dualities between the Hopf algebras, Lie algebras and Lie coalgebras in which double shuffle and double zeta relations are generally studied.

\smallskip
\[\xymatrix{
\Q[\overline Z(w)]\ar@{->>}[d] & \Q\chev{x,y}\ar@{.>}[l]_\sim\\
\Q[Z(w)]\ar@{->>}[d] & \mathcal{SH}\ar@{^{(}->}[u]\ar@{.>}[l]_\sim\\
\overline{\FZ}\ar@{->>}[d] & \overline\FZ^*\ar@{^{(}->}[u]\ar@{.>}[l]_\sim\\
\FZ\ar@{->>}[d] &  \Uds\ar@{^{(}->}[u]\ar@{.>}[l]_\sim\\
\nfz & \ds\ar@{^{(}->}[u]\ar@{.>}[l]_\sim
}\]\label{diagramme}

\bigskip\noindent
The top right-hand space is the underlying vector space of the free polynomial ring on two non-commutative variables~$x$ and~$y$, with basis the set of words in~$x$ and~$y$, graded by \emph{weight} (i.e. degree of monomials). The top left-hand space~$\Q[\overline Z(w)]$ is its graded dual, which is the direct sum of the duals of the graded parts of~$\Q\chev{x,y}$. As~a vector space, we equip the graded dual of~$\Q\chev{x,y}$ with~a dual basis to the basis of words~$w$ in~$x$ and~$y$, and write~$\overline Z(w)$ for the dual basis element associated to~a word~$w$, so that
\[\scal{\overline Z(w)}{v}=\delta^w_v\text{.}\]

\bigskip\noindent
All the vector spaces in the diagram except for the bottom two are actually equipped with Hopf algebra structures. We do not need to specify the explicit multiplication and coproduct on each space, but we do note that~$\Q\langle x,y\rangle$, the free polynomial ring on two non-commutative variables~$x$ and~$y$, is equipped with the standard coproduct given by~$\Delta(x)=x\otimes 1+1\otimes x$ and~$\Delta(y)=y\otimes 1+1\otimes y$, and that its dual~$\Q[\overline Z(w)]$, where~$\overline Z(w)$ denotes the dual basis element of~a word~$w\in \Q\langle x,y\rangle$, is equipped with the multiplication dual to this coproduct, which is the shuffle multiplication of words~$\overline Z(u)\,\overline Z(v)=\overline Z(u\sha v)$\footnote{The shuffle product of words is defined recursively by~$w\sha 1=1\sha w=w$,~${s\,u\sha t\,v} = {s\,(u\sha t\,v )} + {t\,( s\,u \sha v)}$ where~$t,s\in \{x,y\}$.}. The multiplication~$\odot$ on~$\Q\langle x,y\rangle$ is \emph{not} the simple concatenation, but~a complicated rule which we do not know how to write down explicitly in general (but see the beginning of section~\ref{section3} for certain cases). The top horizontal map is the duality isomorphism~$w\mapsto \overline Z(w)$.

Let us define the other spaces and maps in the diagram. 

We say that~a word~$w$ in~$x$ and~$y$ is \emph{convergent} if~$w=xvy$ for any word~$v$ (even~a constant). The second left-hand space down in the diagram,~$\Q[Z(w)]$, is the quotient of the top space by the \emph{shuffle regularization relations} defined for all non-convergent words as follows. Let~$w=y^r\,v\,x^s$ where~$v$ is~a convergent word. Then~$\Q[Z(w)]$ is the quotient of~$\Q[\overline Z(w)]$ by the linear relations
\begin{equation}\label{eqreg}
\overline Z(w)=\sum_{a=0}^r \sum_{b=0}^s (-1)^{a+b}
\overline{Z}\bigl(\pi(y^a\sha y^{r-a}\,u\,x^{s-b} \sha x^b)\bigr)
\end{equation}
where~$\pi$ denotes the projection of polynomials onto just their convergent words and~$Z$ is considered to be linear on sums of words. We write~$Z(w)$ for the image of~$\overline Z(w)$ in this quotient; by definition,~$\Q[Z(w)]$ is spanned by the symbols~$Z(w)$ for convergent~$w$. It is~a long-established theorem that the formula~(\ref{eqreg}) (known as ``shuffle regularization'', and given explicitly by Furusho~\cite{Furu} building on work by Le-Murakami, Zagier and others) ensures that
\[Z(w)\,Z(w')=Z(w\sha w')\]
 for all words~$w,w'$, so that the algebra structure of~$\Q[\overline Z(w)]$ under shuffle multiplication descends to~$\Q[Z(w)]$. Formula~(\ref{eqreg}) shows in particular that~$Z(x)=Z(y)=0$ in~$\Q[Z(w)]$. The dual space is~a subspace of~$\Q\langle x,y\rangle$ denoted by~$\mathcal{SH}$. 

Although the horizontal map is obviously~a duality isomorphism, we have chosen to write it as~a map~$\mathcal{SH}\rightarrow \Q[Z(w)]$ and similarly for all the lower levels of the diagram, simply because the map in this direction is easier to describe: if~$f$ lies in any of the right-hand spaces, then we may consider~$f=\sum a_ww\in \Q\langle x,y \rangle$, and the horizontal duality maps are obtained by taking the image of the dual element~$f^*=\sum a_w\overline Z(w)$ in the corresponding quotient on the left-hand side.

The next space down,~$\overline{\FZ}$, is obtained by quotienting~$\Q[Z(w)]$ by the \emph{regularized stuffle relations}
\begin{equation}\label{eqstu}
Z^*(u)\,Z^*(v)=Z^*(u*v)
\end{equation}
for all words~$u$ and~$v$ ending in~$y$, where~$Z^*$ is defined as follows:
\[\begin{cases}
\hfill Z^*(v)=Z(v) &  \text{for convergent words }v \\
\hfill Z^*(\underbrace{1,\ldots,1}_n) &  \text{is defined by~(\ref{eq10}) below} \\
Z^*(y^mv)=\displaystyle\sum_{r=0}^m Z^*(\underbrace{1,\ldots,1}_r)Z(y^{m-r}v) &  \text{for convergent }v\text{,}
\end{cases}
\]
where we set
\begin{equation}\label{eq10}\exp \sum_{r\ge 1} \tfrac{(-1)^{r-1}}{r} \, Z(x^{r-1}y)\,y^r=
\sum_{r\ge 0} Z^*(\underbrace{1,\ldots,1}_r)\,y^r\text{.}
\end{equation}

Using this definition to transform the~$Z^*$'s into algebraic expressions in the symbols~$Z(w)$ for convergent words, and then using shuffle multiplication to transform each product into~a linear combination of convergent~$Z(w)$, equation~(\ref{eqstu}) translates into a set of linear relations between the convergent symbols~$Z(w)\in \Q[Z(w)]$, and~$\overline{\FZ}$ is obtained by quotienting~$\Q[Z(w)]$ by these linear relations. We continue to write~$Z(w)$ for the image of this element in~$\overline{\FZ}$ and~$\FZ$ by a slight abuse of notation.

For convergent words~$w=x^{r_1-1}y\cdots x^{r_k-1}y$ with~$r_1>1$, let us write 
\begin{equation}\label{eq11}
 Z(w)=Z(r_1,\ldots,r_k)
\end{equation}
in order to underline the equivalence between the~$x,y$-word notation for formal zetas and the usual notation for real zetas. Since it is well-known and easy to prove that real multiple zeta values satisfy the double shuffle relations, the formal zeta space~$\overline{\FZ}$ surjects to the space of real multiple zeta values defined in~(\ref{eqmzv}) simply via 
\[Z(r_1,\ldots,r_k)\mapsto \zeta(r_1,\ldots,r_k)\label{.}\]

The space~$\FZ$ is the quotient of~$\overline{\FZ}$ by the ideal generated by~$Z(xy)=Z(2)$, and the \emph{new zeta space}~$\nfz$ is the vector space obtained by quotienting~${\FZ}$ by the subspace generated by~${\FZ}_0=\Q$ and by all products~${\FZ}_{\ge 1}^2$ (note that every space in the diagram is graded by weight).

The new zeta space~$\nfz$ is~a Lie coalgebra, as was shown by Goncharov~\cite{GonG} who displayed an explicit Lie cobracket on it, dual to the Poisson bracket. Its dual is the Lie algebra~$\ds$ (a complete proof of this elementary fact is not easy to find in the literature, but was given for example in~\cite[prop.~1.27]{Carr}). Thus, the elements of~$\ds$ are the set of primitive elements for the coproduct on~$\FZ^*$ (which is just the restriction of the coproduct~$\Delta$ on~$\Q\langle x,y\rangle$). From the Milnor-Moore theorem (or~a corollary of it, cf.~\cite[theorem~1.22]{CM}), since~$\FZ$ is~a positively graded commutative Hopf algebra over~$\Q$ such that~$\FZ_0=\Q$ and each of its graded pieces is finite-dimensional, its dual~$\FZ^*$ is isomorphic to the universal enveloping algebra of its set of primitive elements; thus~$\FZ^*\simeq \Uds$.

This completes the definition of all the left-hand spaces in the diagram, and, by duality, the right-hand spaces. Since the latter are all vector spaces which are subspaces of the polynomial ring~$\Q\langle x,y\rangle$, the spaces on the right-hand side of the diagram can all be computed explicitly (in small weight) as the spaces killed by the kernels of each quotient map on the left. For example, since~$\scal{Z(w)}{f}=\scal{f}{w}$, the space~$\mathcal{SH}$ consists of polynomials~$f(x,y)$ such that for every non-convergent word~$w=y^rvx^s$, we have
\[\scal{f}{w}-\sum_{a=0}^r \sum_{b=0}^s (-1)^{a+b} 
\scal{f }{  \pi(y^a \sha y^{r-a}vx^{s-b} \sha x^b}=0\text{.}\]
The dimension of the weight~$n$ part~$\mathcal{SH}_n$ is equal to~$2^{n-2}$.

As we saw for~$\ds$ in section~\ref{section1}, the right-hand spaces are all equipped with~a depth filtration defined by taking the depth of~a polynomial~$f$ in~$x$ and~$y$ to be the minimal number of~$y$'s occurring in any monomial of~$f$ with non-zero coefficient.

The vector spaces on the left-hand side of the diagram can then all be equipped with the dual depth filtrations. Namely, for each space~$V$ on the left, we set~$V^d$ to be the subset of~$V$ annihilated by~$(V^*)^{d+1}$, with~$V^0=0$. Thus we have
\begin{equation}\label{eq12}
 (V^*)_n^d/(V^*)_n^{d+1}\simeq V_n^d/V_n^{d-1}\text{.}
\end{equation}
By the action~$\scal{Z(w)}{f}=\scal{f}{w}$, it is obvious that if~$w$ is~a convergent word containing~$d-1$~$y$'s and~$f$ is~a polynomial of depth~$\ge d$, then~$\langle Z(w),f \rangle=0$, so the depth filtration on the spaces on the left corresponds to the usual notion of depth filtration of multiple zeta values, for which the depth of~$Z(w)$ for~a convergent word~$w$ is the number of~$y$'s in~$w$.

\section{The main result}\label{section3}

The main result of this paper says that the GKZ relations~(\ref{eqgkz}), which are valid in the formal multiple zeta algebra~$\overline{\FZ}$, can be deduced in even weight~$k\ge 12$ directly from the period polynomial relations~(\ref{eqpp}) in~$\ds$ by duality, and vice versa.

\begin{thm}\label{mainthm}
 There is a natural bijective correspondence connecting relations of theorem~\ref{thmgkz} between double zeta values and relations of theorem~\ref{thmschn} between Poisson brackets of Lie polynomials.
\end{thm}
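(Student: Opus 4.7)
The plan is to exploit the vector-space duality between $\ds$ and $\nfz$ codified by the formula $(V^*)_n^d/(V^*)_n^{d+1} \simeq V_n^d/V_n^{d-1}$ from section~\ref{sectionUds}. The Schneps relations of Theorem~\ref{thmschn} are statements in $\ds_k^2/\ds_k^3$, while the GKZ relations of Theorem~\ref{thmgkz} are statements in $\nfz_k^2/\nfz_k^1$, and these two finite-dimensional vector spaces are canonically in duality. Given a Lie polynomial $P = \sum a_i \{f_{2i+1}, f_{k-2i-1}\} \in \ds_k^2$, the condition $P \equiv 0 \pmod{\ds_k^3}$ means that the linear form $Z(w) \mapsto \scal{P}{w}$ on $\nfz_k^2/\nfz_k^1$ vanishes identically; rewritten in the basis of convergent double zetas, this becomes exactly a GKZ-type relation among the $Z(r,k-r)$ modulo $Z(k)$, and the implication reverses by perfectness of the pairing.

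The next step is to make this pairing explicit. Since, as emphasized in the paper, the depth-$2$ part of $\{f_n, f_{k-n}\}$ depends only on the normalized depth-$1$ parts $\ad_{x}^{n-1}(y)$ and $\ad_{x}^{k-n-1}(y)$, I would expand
\[\ad_{x}^{n-1}(y) = \sum_{j=0}^{n-1}(-1)^j \binom{n-1}{j}\,x^{n-1-j}\,y\,x^j\]
and compute both the plain bracket and the derivation correction $D_f(g) - D_g(f)$ in the Poisson bracket, collecting the result as a polynomial in the monomials $x^{a} y x^{b} y x^{c}$ with $a+b+c+2 = k$. One then applies the shuffle regularization~(\ref{eqreg}) to rewrite the coefficients of these monomials in terms of convergent depth-$2$ words $x^{r-1}yx^{k-r-1}y$ and depth-$1$ corrections that collapse to multiples of $Z(k)$ in $\nfz_k$.

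The final identification is with the change of variable~(\ref{changevar}). Organizing the $a_i$ into $P(X,Y) = \sum a_i (X^{2i}Y^{k-2-2i} - X^{k-2-2i}Y^{2i})$ and the above Lie expansion into a bivariate generating function in $X$ and $Y$, the binomial coefficients $\binom{n-1}{j}$ coming from the $\ad_{x}$-expansion combine to produce exactly the substitution $X \mapsto X+Y$, with the universal factor $\binom{k-2}{r-1}$ absorbing the passage from the Lie monomial basis to the double zeta basis. The main obstacle is the combinatorial bookkeeping of the regularization: one must verify that all non-convergent contributions land in the depth-$1$ subspace of $\nfz_k$ (so that congruence modulo $Z(k)$ is preserved), and that the derivation part of the Poisson bracket contributes only depth-$1$ terms under this pairing, in order for the resulting $q_{r,k-r}$ to match precisely those given by $P(X+Y, Y)$ in both directions of the bijection.
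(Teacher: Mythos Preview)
Your overall intuition—that the two families of relations are exchanged by the duality between $\ds_k^2/\ds_k^3$ and $\nfz_k^2/\nfz_k^1$—is the paper's conceptual starting point as well. But the first paragraph contains a genuine gap. You write that $P\equiv 0\pmod{\ds_k^3}$ means the linear form $Z(w)\mapsto\scal{P}{w}$ vanishes on $\nfz_k^2/\nfz_k^1$, and that ``rewritten in the basis of convergent double zetas, this becomes exactly a GKZ-type relation among the $Z(r,k-r)$''. This conflates two different objects. The vanishing of the functional $\scal{P}{\,\cdot\,}$ on depth-$2$ zetas is a collection of scalar equations $\scal{P}{x^{r-1}yx^{k-r-1}y}=0$, one for each $r$; it does not produce a single linear dependence $\sum q_{r,k-r}\,Z(r,k-r)\equiv 0$ among the double zetas. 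A relation among the $Z(r,k-r)$ is an element of the kernel of a surjection onto $\overline{\FZ}_k^2/\overline{\FZ}_k^1$, which is a different kernel altogether; perfectness of the pairing does not identify the two.

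The paper makes this precise by introducing the pairing matrix $A_{ij}=\scal{Z_i}{g_j}$ with $g_j=f_{2j+1}\odot f_{k-2j-1}\in\Uds$: the Schneps relations are exactly $\Ker A$ (linear combinations of the $g_j$ of depth $\ge 3$, using that the odd $Z_i$ span $\overline{\FZ}_k^2$), while the GKZ relations are exactly $\Ker\transp A$. These kernels have equal dimension by rank--nullity, but are not the same subspace of $\Q^{(k-4)/2}$, and the whole content of the theorem is the explicit bijection between them. The key step, absent from your sketch, is to exhibit invertible matrices $D$ and $B$ built from binomial coefficients such that $\transp ADB$ is symmetric; this gives $\Ker\transp A = DB\cdot\Ker A$, and one then checks that left multiplication by $DB$ is precisely the passage $(a_i)\mapsto(q_{2j+1,k-2j-1})$ encoded by $P(X,Y)\mapsto P(X+Y,Y)$. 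Two further remarks: no shuffle regularization enters (the $Z_i$ are already convergent and the matrix entries $A_{ij}=\binom{2j}{2i}-\binom{2j}{k-2-2i}+\delta_{i+j}^{(k-2)/2}$ are computed directly from the $\odot$-formula); and the derivation part of the bracket does \emph{not} contribute only depth-$1$ terms—it supplies the Kronecker-delta term in $A_{ij}$, which is essential for the block decomposition that shows $\Ker A$ contains only antisymmetric vectors.
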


The rest of this section is devoted to the proof of this result. The strategy is to show that these relations can be described respectively as lying in the kernel of a certain matrix and of its transpose.

\medskip
In order to prove theorem~\ref{mainthm}, we first rephrase the main result of~\cite{Schn} as follows, showing that in fact it generalizes from~$\ds$ to the universal enveloping algebra~$\Uds$. The notation~$\odot$ indicates multiplication of elements of~$\ds$ in the universal enveloping algebra. The explicit formula for the multiplication of two polynomials~$f$ and~$g$ in~$\Uds$ is complicated, but in the case where~$f$ lies in fact in~$\ds$, it simplifies to
\begin{equation}\label{odot}f\odot g=fg+D_f(g)\end{equation}
where~$D_f$ is the derivation defined in paragraph~\ref{subsection2}.

\begin{prop}\label{thmiso}
Let~$k\ge 12$ be even and fix~a choice of depth~$1$ elements~$f_n$ of~$\ds_n$ for every odd~$n\ge 3$. For~$1\le i\le \frac{k-4}{2}$, set
\[\begin{cases}
w_i=x^{2i}yx^{k-2i-2}y\\
Z_i=Z(2i+1,k-2i-1)=Z(w_i)\in\overline{\FZ}\\
g_i=f_{2i+1}\odot f_{k-2i-1}\in \Q\chev{x,y}\text{.}
\end{cases}\]
Let~$A$ be the~$\frac{k-4}{2}\times \frac{k-4}{2}$ matrix defined by
\[A_{ij}=\scal{Z_i}{g_j}\qquad \text{for}\qquad 1\le j\le \tfrac{k-4}{2}\text{.}\]
Then there is an isomorphism~$E_k\simeq \Ker A$ given by 
\[\sum_{i=1}^{[\frac{k-4}{4}]} a_i\,(X^{2i}Y^{k-2-2i}-X^{k-2-2i}
Y^{2i})\longmapsto (a_1,\ldots,a_{\frac{k-4}{2}})\in \Ker A\text{.}\]
\end{prop}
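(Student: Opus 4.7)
The plan is to exploit Theorem~\ref{thmschn} together with the key identity $\{f,g\}=f\odot g-g\odot f$. This is immediate from formula~(\ref{odot}) applied to both orderings: $f\odot g - g\odot f = (fg+D_f(g)) - (gf+D_g(f)) = [f,g]+D_f(g)-D_g(f) = \{f,g\}$. Applied to our Lie elements it yields $\{f_{2j+1},f_{k-2j-1}\} = g_j - g_{(k-2)/2-j}$, so the involution $j\leftrightarrow (k-2)/2-j$ on $\{1,\ldots,(k-4)/2\}$ (with $[(k-4)/4]$ non-trivial orbits, plus a fixed point at $(k-2)/4$ when $k\equiv 2\pmod{4}$ on which the bracket is trivially zero) pairs the products $g_i\in\Uds$ into Poisson brackets in $\ds$.

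The forward map $E_k\to\Ker A$ then goes as follows. Starting from $P=\sum_{i=1}^{[(k-4)/4]}a_i(X^{2i}Y^{k-2-2i}-X^{k-2-2i}Y^{2i})\in E_k$, I would extend the coefficients antisymmetrically via $a_i+a_{(k-2)/2-i}=0$ to a vector $(a_1,\ldots,a_{(k-4)/2})$. The identity above gives
\[\Phi := \sum_{i=1}^{(k-4)/2} a_i\,g_i = \sum_{i=1}^{[(k-4)/4]} a_i\,\{f_{2i+1},f_{k-2i-1}\},\]
which lies in $\ds_k^3$ by Theorem~\ref{thmschn}. As a Lie polynomial of depth at least three, $\Phi$ has zero coefficient on any depth-$2$ word; in particular $\scal{\Phi}{w_i}=0$ for each $i$, so the extended vector lies in $\Ker A$. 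The map is clearly injective, since its first $[(k-4)/4]$ components recover the defining coefficients of $P$.

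For surjectivity, given $(a_j)\in\Ker A$, one would read off the first $[(k-4)/4]$ components as a candidate polynomial~$P$. Two things remain to be shown: that any $(a_j)\in\Ker A$ is automatically antisymmetric, and that under this antisymmetry the associated Poisson sum $\Phi$ satisfies $\Phi\in\ds_k^3$ (at which point Theorem~\ref{thmschn} concludes). Because the $w_i$ are depth-$2$ words, $\scal{g_j}{w_i}$ depends only on the normalised depth-$1$ parts $u_{n-1}=\ad_x^{n-1}(y)$ of the $f_n$, so the matrix~$A$ is completely determined by the expansions $u_{2j}\odot u_{k-2j-2}=u_{2j}u_{k-2j-2}+D_{u_{2j}}(u_{k-2j-2})$ and their coefficients on the words $x^{2i}yx^{k-2i-2}y$.

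The hard part will be this explicit analysis of~$A$. On one side, the antisymmetry of kernel vectors reduces to showing that the symmetric combinations $g_i+g_{(k-2)/2-i}$ contribute linearly independently to the $w_j$-coefficients; on the other, the equivalence $\Phi\in\ds_k^3\Leftrightarrow\scal{\Phi}{w_i}=0$ for every $i\in\{1,\ldots,(k-4)/2\}$ reduces to showing that the restriction of the $(k-4)/2$ functionals $\scal{\cdot}{w_i}$ to the $[(k-4)/4]$-dimensional span of the depth-$2$ brackets $\{u_{2i},u_{k-2i-2}\}$ is injective. Both facts come down to a finite binomial-coefficient computation on the expansions of $u_a\odot u_b$, after which the bijection $E_k\simeq\Ker A$ follows from Theorem~\ref{thmschn}.
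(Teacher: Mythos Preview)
Your plan mirrors the paper's approach closely: both use the identity $\{f,g\}=f\odot g-g\odot f$ to pass between the $g_i$ and Poisson brackets, both invoke Theorem~\ref{thmschn} for the injection $E_k\hookrightarrow\Ker A$, and both reduce surjectivity to showing that every kernel vector is antisymmetric under $j\leftrightarrow(k-2)/2-j$. Your forward direction is correct as written.

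The gap is that you stop precisely where the real work begins. You say that antisymmetry and your detection claim ``come down to a finite binomial-coefficient computation on the expansions of $u_a\odot u_b$,'' but you do not perform it, and this computation is the substance of the proof. The paper carries it out: it computes
\[A_{ij}=\binom{2j}{2i}-\binom{2j}{k-2-2i}+\delta_{i+j}^{(k-2)/2}\]
from~(\ref{odot}), conjugates $A$ by the change-of-basis matrix $T$ to the $\pm1$-eigenbasis of the antidiagonal reflection, and checks entry by entry that $T^{-1}AT=\bigl(\begin{smallmatrix}\id&0\\B&C\end{smallmatrix}\bigr)$. The identity block on the symmetric part is exactly what forces $a_i=-a_{(k-2)/2-i}$ for every kernel vector. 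Without this verification your argument is only an outline.

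There is also a difference in how the second surjectivity ingredient is handled. For the implication ``$(a_j)\in\Ker A$ and antisymmetric $\Rightarrow\Phi\in\ds_k^3$,'' the paper does not attempt your injectivity of the $w_i$-functionals on the bracket span. Instead it quotes the result of~\cite{GKZ} that the odd-component $Z_i$ together with $Z(k)$ span $\overline{\FZ}_k^2$; this identifies $\Ker A$ directly with the set of combinations $\sum a_ig_i$ lying in $(\overline{\FZ}^*)_k^3$, after which antisymmetry plus Theorem~\ref{thmschn} finishes. Your route via injectivity would also work in principle, but it is a second unperformed computation, and appealing to the span result is both shorter and already available.
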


\begin{proof}
The kernel of~$A$ is the set of vectors such that the corresponding linear combinations of the~$g_i$ are annihilated by the~$Z_i$. In~\cite{GKZ}, it is proved that in even weight~$k$, the odd-component double zeta values~$Z(r,s)$ with~$r,s\equiv 1\pmod{2}$ span the depth~$2$ part~$\overline{\FZ}_k^2$ of~$\overline{\FZ}_k$ (loc. cit., theorem 2, using the fact that~$\overline{\FZ}_k^2$ is~a quotient of~$D_k$). Therefore, since the~$Z_i$ together with~$Z(k)$ span the depth~$2$ part~$\overline{\FZ}^2_k$, and~$Z(k)$ automatically annihilates all the~$g_i$ since they are all of depth~$2$, this means that the kernel of~$A$ is the set of linear combinations
\begin{equation}
\label{eq13}\sum_{i=1}^{\frac{k-4}{2}} a_i\,g_i
\end{equation}
which are zero modulo~$\overline{\FZ}_k^3$. It follows from~\cite{Schn} that there is an injective map~$E_k\hookrightarrow \Ker A$; by the property~$P(X)=X^{k-2}P(\frac1X)$ satisfied by the elements of~$E_k$, we have 
\[a_i=-a_{\frac{k-2i-2}2}\]

\medskip\noindent
for~$1\le i\le \frac{k-4}{2}$, so that all the linear relations between the~$g_i$ arising from polynomials in~$E_k$ are actually linear relations amongst the pairs~$g_i-g_{(k-2i-2)/2}=f_{2i+1}\odot f_{k-2i-1}-f_{k-2i-1}\odot f_{2i+1}=\{f_{2i+1},f_{k-2i-1}\}$. Since it is shown in~\cite{Schn} that there are no other linear relations between these brackets, to prove the proposition we need only show that there can be no linear combination~(\ref{eq13}) between the~$g_i$ which is zero modulo depth 3 but has~$a_i\ne -a_{(k-2i-2)/2}$ for some~$i$.

To do this, it is convenient to compute the elements of the matrix explicitly, which is not difficult given the expression~(\ref{odot}) of the~$\odot$ multiplication. As the~$Z_{i}$'s are of depth~$2$, the desired scalar product is in fact equal to
\begin{align*}
 A_{ij} &= \scal{Z(x^{2i}yx^{k-2-2i}y)}{f_{2j+1}\odot f_{k-2j-1}} \\ &= \scal{Z(x^{2i}yx^{k-2-2i}y)}{\ad_{x}^{2j}(y)\,\ad_{x}^{k-2-2j}(y) + D_{\ad_{x}^{2j}(y)}(\ad_{x}^{k-2-2j}(y))} \\
&= \scal{Z(x^{2i}yx^{k-2-2i}y)}{\ad_{x}^{2j}(y)\ad_{x}^{k-2-2j}(y) + \ad_{x}^{k-2-2j}([y,\ad_{x}^{k-2-2j}(y)])}
\end{align*}
using the definitions of~$D$ and~$\odot$. Then, explicitly computing the coefficients of monomials in the Lie brackets, we find that
\[A_{ij}=\binom{2j}{2i}-\binom{2j}{k-2-2i}+\delta_{i+j}^{\frac{k-2}2}\text{.}\]
Let~$S$ be the~$\frac{k-4}{2}\times \frac{k-4}{2}$ matrix with~$-1$'s along the antidiagonal, so~$S^2=\id$. Let us make the base change to~a basis of eigenvectors of~$S$:
\[\begin{cases}
v_j = \transp(\underbrace{0,\ldots,0}_{j-1},1,0,\ldots,0,1,\underbrace{0,\ldots
,0}_{j-1})  &  \text{for }1\le j\le \left(\frac{k-4}{4}\right)\text{, with eigenvalue }-1 \\
w_0 = \transp(\underbrace{0,\ldots,0}_{(k-6)/4},1,\underbrace{0,\ldots,0}_{(k-6)/4})  & 
\text{if }k\equiv 2\pmod{4} \\
w_j = \transp(\underbrace{0,\ldots,0}_{j-1},-1,0,\ldots,0,1,\underbrace{0,\ldots
,0}_{j-1}) & \text{for }1\le j\le \bigl[\frac{k-4}{4}\bigr]\text{, with eigenvalue }1\text{.}
\end{cases}
\]

Let~$T$ be the matrix having columns~$v_1,\ldots,v_{[\frac{k-4}{4}]},w_0,w_1,\ldots,w_{[\frac{k-4}{4}]}$ in that order (with the~$w_0$ left out if~$k\equiv 0\pmod{4}$). To show that every vector in the kernel of~$A$ is of the form~$\transp(a_1,a_2,\ldots,-a_2,-a_1)$, we use~$T$ to make the basis change from the standard basis to the~$v_i$ and~$w_j$, and then show the equivalent result that the kernel of the matrix~$M=T^{-1}AT$ lies in the space generated by the~$w_j$.

But this result is an immediate consequence of the following claim on the form of~$M$, so it remains only to prove this claim.

\begin{claim}
The matrix~$M=T^{-1}AT$ is~a block matrix of the form 
\begin{equation}\label{eq14}
 M=T^{-1}AT=\begin{pmatrix}\id & 0 \\  B & C\end{pmatrix}\text{,}
\end{equation}
where all four blocks are of dimension~$\frac{k-4}4\times \frac{k-4}4$ if~$k\equiv 0\pmod{4}$, whereas if~$k\equiv 2\pmod{4}$, the identity block is of dimension~$\frac{k-2}4\times \frac{k-2}4$ and the~$0$ block of dimension~$\frac{k-2}4\times \frac{k-6}4$. 
\end{claim}

\medskip
\noindent\emph{Proof of claim.} 
The calculation of the matrix entries turns out to be particularly easy since all the binomial coefficients of the~$A_{ij}$ cancel out and it is merely~a matter of checking the Kronecker deltas. We drop the upper index of the deltas since it is always equal to~$\frac{k-2}2$.

Suppose first that~$k\equiv 0 \pmod 4$. For the upper left-hand block~$1\le i,j\le \frac{k-4}{4}$, we have
\[M_{ij}=A_{ij}+A_{i,\frac{k-2}{2}-j}+A_{\frac{k-2}{2}-i,j}
+A_{\frac{k-2}{2}-i,\frac{k-2}{2}-j}\text{,}\] 
so
\[M_{ii}=\tfrac{1}{2}(\delta_{2i}+2\delta_{\frac{k-2}{2}}+\delta_{k-2i-2})=1\]
and
\[M_{ij}=\tfrac{1}{2}(\delta_{i+j}+\delta_{\frac{k-2}{2}+i-j}\delta_{\frac{k-2}{2}-i+j}+\delta_{k-2-i-j})=0\ \ {\rm if}\ \ i\ne j\text{,}\]
so this block is indeed just the identity matrix.

For the upper right-hand block~$1\le i\le \frac{k-4}{4}$ and~$\frac{k-2}{4}\le j\le \frac{k-4}{2}$, we have
\begin{align*}
 M_{ij}  & = -A_{ij}+A_{i,\frac{k-2}{2}-j}-A_{\frac{k-2}{2}-i,j}+A_{\frac{k-2}{2}-i,\frac{k-2}{2}-j}\\
  & = \tfrac{1}{2}(-\delta_{i+j}+\delta_{\frac{k-2}{2}+i-j}-\delta_{\frac{k-2}{2}-i+j}+\delta_{k-2-i-j})=0\text{,}
\end{align*}
since if~$i+j=\frac{k-2}{2}$ the first and last deltas cancel, whereas if~$i=j$ then the middle deltas cancel. So the upper right-hand block is a~$0$ block, completing the proof in the case~$k\equiv 0\pmod 4$.

This calculation above remains valid in the case~$k\equiv 2\pmod4$, but it is not complete; it shows that the upper left~$\frac{k-6}{4}\times \frac{k-6}{4}$ block is the identity, whereas the upper right~$\frac{k-6}{4}\times \frac{k-2}{4}$ block is zero, but we still have to determine the~$(\frac{k-2}{4})$-th row. We have
\[M_{\frac{k-2}{4},j}=
\begin{cases}
A_{\frac{k-2}{4},j}+A_{\frac{k-2}{4},
\frac{k-2}{2}-j} & \text{if }1\le j\le \frac{k-6}{4} \\
A_{\frac{k-2}{4},\frac{k-2}{4}} &  \text{if }j=\frac{k-2}{4} \\
-A_{\frac{k-2}{4},j}+A_{\frac{k-2}{4}, \frac{k-2}{2}-j} & 
\text{if }\frac{k+2}{4}\le j\le \frac{k-4}{2}\text{.}
\end{cases}
\]
Again, the binomial coefficients cancel trivially, so this becomes
\[
M_{\frac{k-2}{4},j}=
\begin{cases}
 \delta_{\frac{k-2}{4}+j}+
\delta_{\frac{3k-6}{4}-j} &  \text{if }1\le j\le \frac{k-6}{4} \\
\delta_{\frac{k-2}{2}} &  \text{if }j=\frac{k-2}{4} \\
-\delta_{\frac{k-2}{4}+j}+\delta_{\frac{3k-6}{4}-j} &  \text{if }\frac{k+2}{4}\le j\le \frac{k-4}{2}\text{,}
\end{cases}
\]
all of which are zero except when~$j=\frac{k-2}{4}$, in which case the value is equal to~$1$, completing the proof of the claim, and thus of the proposition. 
\end{proof}

This result shows that there are no other linear relations modulo depth~$3$ between the elements~$f_{2i+1}\odot f_{k-2i-1}$ than the already known period polynomial relations between Poisson brackets~$\{f_{2i+1},f_{k-2i-1}\}$, and thus that the kernel of~$A$ consists exactly in vectors whose coefficients are the coefficients of period polynomials~$P\in E_k$.

\begin{ex}
In weight~$k=12$, the matrices~$A$ and~$T^{-1}AT$ are given by
\[A=\begin{pmatrix}1 & 6 & 15 & 28\\ 0 & 1 & 15 & 42\\ 0 & 0 & -14 & -42\\ 0 & -6 & -15 & -27\end{pmatrix}\text{,}
\qquad 
T^{-1}AT=\begin{pmatrix}1 & 0 & 0 & 0\\ 0 & 1 & 0 & 0\\ -28 & -21 & -27 & -9\\ -42 & -15 & -42 & -14\end{pmatrix}\cdot
\]
The kernel of~$A$ is generated by the weight~$12$ period polynomial vector~$\transp(1,-3,3,-1)$ corresponding to the only linear relation modulo depth~$3$ between~$f_3\odot f_9$, $f_5\odot f_7$, $f_7\odot f_5$ and~$f_9\odot f_3$, namely
\[f_3\odot f_9-3\,f_5\odot f_7+3\,f_7\odot f_5-f_9\odot f_3=\{f_3,f_9\}-3\,\{f_5,f_7\}\equiv 0
\pmod{\text{depth }3}\text{.}\]
\end{ex}

From this example, we can already perceive how the GKZ relations between odd-component double zetas arise in this situation. Indeed, as we saw, the kernel of~$A$ is the set of linear relations between generators of~$(\overline{\FZ}^*)_{12}^2 / (\overline{\FZ}^*)_{12}^3$. Thus by duality, the kernel of the transpose~$\transp A$ is the set of linear relations between the odd-component double zetas~$Z(3,9)$,~$Z(5,7)$,~$Z(7,5)$,~$Z(9,3)$ in the dual space~$\FZ_{12}^2/\FZ_{12}^1$, which correspond to linear combinations of the odd-component double zetas which are equal to~a scalar multiple of~$Z(k)$. One computes explicitly that the kernel of~$\transp A$ is generated by the vector~$\transp(0,168,150,28)$; therefore we know without further investigation that 
\[168\,Z(5,7)+150\,Z(7,5)+28\,Z(9,3)\equiv 0\pmod{Z(12)}\text{,}\]
thus recovering relation~(\ref{relgkz12}) except for the coefficient of~$Z(12)$ (the coefficients of the~$Z(k)$ terms in general even weight~$k$ are computed in~\cite{GKZ}).

This argument generalizes to the following statement, which is really the heart of the deduction of the double zeta relations from the period polynomial relations in the double shuffle Lie algebra and vice versa.

\begin{coro}
For all even~$k\ge 16$, the space~$E_k$ of weight~$k$ restricted period polynomials is in bijection with the kernel of the matrix~$A$, itself in bijection with the set of linear relations between the Poisson brackets~$\{f_{2i+1},f_{k-2i-1}\}$. Thus the kernel of the transpose matrix~$\transp A$ has the same dimension as~$E_k$, and is in bijection with the set of~$\Q$-linear relations between the odd-component weight~$k$ double zetas~$Z(r,s)$ and~$Z(k)$.
\end{coro}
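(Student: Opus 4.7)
The plan is to package Proposition~\ref{thmiso} together with elementary duality to obtain the statements about $\transp A$. First, Proposition~\ref{thmiso} itself provides the isomorphism $E_k\simeq\Ker A$, and its proof also shows that the image of a period polynomial with coefficients $(a_i)$ in $\Ker A$ corresponds to the linear relation $\sum_i a_i\,\{f_{2i+1},f_{k-2i-1}\}\equiv 0\pmod{\ds_k^3}$, with no other linear relations among these brackets occurring. This gives the first half of the corollary. Since $A$ is a square matrix, rank-nullity yields $\dim\Ker\transp A=\dim\Ker A=\dim E_k$ immediately.

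The remaining new content is the identification of $\Ker\transp A$ with the space of $\Q$-linear relations between the odd-component $Z(r,s)$ and $Z(k)$. The plan is to reinterpret the pairing $A_{ij}=\scal{Z_i}{g_j}$ in light of the duality~(\ref{eq12}) applied to $V=\overline{\FZ}$ at depth two. A vector $(b_i)\in\Ker\transp A$ encodes the condition $\scal{\sum_i b_i Z_i}{g_j}=0$ for every~$j$, and I want to conclude that this forces $\sum_i b_i Z_i$ to be a scalar multiple of~$Z(k)$.

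To carry this out I would combine two facts. First, the $g_j=f_{2j+1}\odot f_{k-2j-1}$ span $(\Uds)_k^2$ modulo $(\Uds)_k^3$; this is the dual counterpart of the spanning of $\overline{\FZ}_k^2/\overline{\FZ}_k^1$ by the odd $Z_i$ which is used in the proof of Proposition~\ref{thmiso} (itself resting on theorem~2 of~\cite{GKZ}). Second, by~(\ref{eq12}) an element of $\overline{\FZ}_k$ killed by all of $(\Uds)_k^2$ must lie in $\overline{\FZ}_k^1=\Q\cdot Z(k)$. Together these give $\sum_i b_i Z_i\equiv 0\pmod{Z(k)}$, and the reverse inclusion is immediate because $Z(k)$ pairs trivially with every depth-two $g_j$. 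The only point requiring attention is the spanning statement for the $g_j$ just cited, but this is a direct transcription across the duality diagram of section~\ref{sectionUds} and is not a genuine obstacle; the substance of the corollary really is already packaged inside Proposition~\ref{thmiso}, with the $\transp A$ half obtained for free by transposing the pairing.
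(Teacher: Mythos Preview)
Your approach is essentially the paper's: Proposition~\ref{thmiso} gives the first half, and the second half is obtained by transposing the pairing and invoking~(\ref{eq12}), which is exactly what the paper does in its one-sentence proof.

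The only place to be careful is your justification that the $g_j$ span $(\Uds)_k^2$ modulo $(\Uds)_k^3$. Calling this ``the dual counterpart'' of the spanning of $\overline{\FZ}_k^2/\overline{\FZ}_k^1$ by the $Z_i$ is misleading: under duality, surjectivity of one family corresponds to \emph{injectivity} of the pairing against the other, not to surjectivity. So the spanning of the $g_j$ does not follow from the spanning of the $Z_i$ by abstract duality alone; it needs its own argument. The paper also asserts this without proof (it simply speaks of ``the $g_i$ viewed as generators of $(\overline{\FZ}^*)_k^2/(\overline{\FZ}^*)_k^3$''), so you are at the same level of detail as the source; but if you want to make the step airtight, argue it directly rather than by invoking duality.
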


\begin{proof}
Since the vectors of~$\Ker A$ correspond to linear relations between the~$g_i$ viewed as generators of~$(\overline{\FZ}^*)_k^2/ (\overline{\FZ}^*)_k^3$, the vectors of~$\Ker\transp A$ correspond to linear relations between the~$Z_i$ (odd-component double zetas) viewed as generators of the dual space, which is~$\overline{\FZ}_k^2/\overline{\FZ}_k^1$ by~(\ref{eq12}).
\end{proof}

The result from~\cite{GKZ} cited in theorem~\ref{thmgkz} proves more than the existence of~a space of linear relations between single and double zetas arising of dimension equal to that of~$E_k$, however; it also states that the coefficients of these relations are essentially the coefficients of the period polynomials under the change of variables~$X\leftarrow X+Y$. We now show that also this result can be deduced from studying the matrix~$A$, which has some particular symmetry properties proven in the following propositions.

\begin{prop}\label{propsym}
Let~$k\ge 12$ be even, and define~$\frac{k-4}{2}\times \frac{k-4}{2}$ matrices~$D$ and~$B$ by
\[D^{-1}=\diag\left(\binom{k-2}{2i}\right)\text{,}\qquad B_{ij}=\binom{2j}{2i}\text{.}\]
Then~$\transp ADB$ is symmetric.
\end{prop}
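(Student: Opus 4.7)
The strategy is a direct computation, exploiting the diagonal structure of $D$. Since $(\transp{A}DB)_{ij}=\sum_m A_{mi}\binom{2j}{2m}/\binom{k-2}{2m}$, the three summands of $A_{mi}=\binom{2i}{2m}-\binom{2i}{k-2-2m}+\delta_{m+i}^{(k-2)/2}$ produce a natural decomposition $(\transp{A}DB)_{ij}=T_1(i,j)-T_2(i,j)+T_3(i,j)$, and my plan is to verify that each $T_\ell$ is symmetric in $i,j$ individually.

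The first piece $T_1(i,j)=\sum_m \binom{2i}{2m}\binom{2j}{2m}/\binom{k-2}{2m}$ is manifestly symmetric. For the second, $T_2(i,j)=\sum_m \binom{2i}{k-2-2m}\binom{2j}{2m}/\binom{k-2}{2m}$, I would apply the change of summation index $m\mapsto (k-2)/2-m$; together with $\binom{k-2}{2m}=\binom{k-2}{k-2-2m}$, this rewrites the sum as $\sum_m\binom{2i}{2m}\binom{2j}{k-2-2m}/\binom{k-2}{2m}$, which is exactly the expression obtained from the original $T_2(j,i)$ by relabelling the dummy index. Hence $T_2(i,j)=T_2(j,i)$.

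The third piece comes from the Kronecker delta fixing $m=(k-2)/2-i$, leaving $T_3(i,j)=\binom{2j}{k-2-2i}/\binom{k-2}{2i}$. Here one needs the elementary factorial identity
\[
\frac{\binom{a}{b}}{\binom{c}{b}}=\frac{\binom{c-b}{c-a}}{\binom{c}{c-a}},
\]
which I would apply with $(a,b,c)=(2j,k-2-2i,k-2)$ to obtain $T_3(i,j)=\binom{2i}{k-2-2j}/\binom{k-2}{2j}=T_3(j,i)$.

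Summing the three symmetric contributions gives the claim. The only step that is not pure index bookkeeping is the binomial identity used for $T_3$, and that is a one-line check by expanding factorials; the rest is just organizing the substitution $m\mapsto (k-2)/2-m$ correctly. I do not expect any serious obstacle beyond careful calculation.
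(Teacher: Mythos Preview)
Your proof is correct and follows essentially the same route as the paper: the same three-term decomposition, the same index shift $m\mapsto (k-2)/2-m$ for the middle term, and the same factorial symmetry for the delta term (the paper expands $T_3$ directly as $\frac{(2i)!\,(2j)!}{(k-2)!\,(2i+2j-k+2)!}$, which is exactly the content of your binomial identity). Nothing further is needed.
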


\begin{proof}
The~$(i,j)$-th entry of~$\transp ADB$ is given by
\begin{align*}
  &  \sum_{r=1}^{\frac{k-4}{2}} \left(\binom{2i}{2r}-\binom{2i}{k-2-2r}+\delta_{i+r}^{\frac{k-2}{2}}\right)\cdot
\binom{k-2}{2r}^{-1}\binom{2j}{2r} \\
 &  =\sum_{r=1}^{\frac{k-4}{2}} \frac{\binom{2i}{2r}\binom{2j}{2r}}{\binom{k-2}{2r}} - \sum_{r=1}^{\frac{k-4}{2}} \frac{\binom{2i}{k-2-2r}\binom{2j}{2r}}{\binom{k-2}{2r}} + \frac{\binom{2j}{k-2-2i}}{\binom{k-2}{k-2-2i}}.
\end{align*}
The left-hand term is obviously symmetric in~$i$ and~$j$, and so is the middle term, using the index change~$r\leftarrow \frac{k-2}{2}-r$. As for the last term, it is equal to 
\[\frac{(2i)!\,(2j)!}{(k-2)!\,(2i+2j-k+2)!}\text{,}\]
so it is also symmetric, which concludes the proof.
\end{proof}

\begin{ex}
Let~$A$ be the matrix in weight~$12$ given explicitly above. Then 
\[\transp ADB={}^tBDA=\frac{1}{630}
\begin{pmatrix}
 14 & 84 & 210 & 392\\
84 & 507 & 1305 & 2478\\
210 & 1305 & 3783 & 7644\\
392 & 2478 & 7644 & 15890
\end{pmatrix}
\cdot\]
The kernel of this matrix (and its transpose) is of course still generated by the same vector~$\transp(1,-3,3,-1)$ as the kernel of~$A$.
\end{ex}

\begin{prop}
Let~$k\ge 12$ be even and suppose that~$\transp (a_1,a_2,\ldots,-a_2,-a_1)\in \Ker A$. Set 
\[P(X,Y)=\sum_{i=1}^{[\frac{k-4}{4}]} a_i\,(X^{2i}Y^{k-2-2i}-X^{k-2-2i}Y^{2i})\]
and define the coefficients~$q_{r,k-r}$ for~$1\le r\le k-3$ by 
\[P(X+Y,Y)=\sum_{r=1}^{k-3} \binom{k-2}{r-1}\, q_{r,k-r}\,
X^{r-1}Y^{k-r-1}\text{.}\]
Then the vector~$\transp (q_{3,k-3},\ldots,q_{k-3,3})$ (with odd indices) lies in the kernel of~$\transp A$, and in fact the kernel of~$\transp A$ consists in exactly these vectors.
\end{prop}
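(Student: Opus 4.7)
The plan is to recast the statement as a matrix identity and then exploit the symmetry of $\transp{A}DB$ established in Proposition~\ref{propsym}. First I would write the hypothesis vector in its full antisymmetric form $\mathbf{a} = \transp{(a_1,\ldots,a_{(k-4)/2})}$ with $a_i = -a_{(k-2-2i)/2}$, so that
\[ P(X,Y) = \sum_{i=1}^{(k-4)/2} a_i\, X^{2i}\, Y^{k-2-2i}. \]
A direct binomial expansion of $(X+Y)^{2i}$ and extraction of the coefficient of $X^{r-1}Y^{k-r-1}$ then gives
\[ \binom{k-2}{r-1}\, q_{r,k-r} = \sum_{i=1}^{(k-4)/2} a_i\, \binom{2i}{r-1}. \]
Specialising to odd $r = 2j+1$ and dividing by $\binom{k-2}{2j}$, this rewrites as the matrix identity $\mathbf{q} = DB\mathbf{a}$, where $\mathbf{q} = \transp{(q_{3,k-3},\,q_{5,k-5},\,\ldots,\,q_{k-3,3})}$ and $D$, $B$ are the matrices of Proposition~\ref{propsym}.

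Next I would apply that proposition. Since $\transp{A}DB$ is symmetric and $D$ is self-transpose, one obtains $\transp{A}DB = \transp{B}DA$, whence
\[ \transp{A}\,\mathbf{q} \;=\; (\transp{A}DB)\,\mathbf{a} \;=\; (\transp{B}DA)\,\mathbf{a} \;=\; \transp{B}\,D\,(A\mathbf{a}) \;=\; 0, \]
because $\mathbf{a}\in\Ker A$ by assumption. This yields $\mathbf{q}\in\Ker\transp{A}$, which is the first half of the statement.

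For the converse inclusion, I would use a dimension count. The matrix $B$ is upper triangular with ones on the diagonal, since $B_{ij}=\binom{2j}{2i}$ vanishes for $i>j$ and equals $1$ when $i=j$; combined with the invertibility of the diagonal matrix $D$, this shows that $DB$ is invertible. Hence $\mathbf{a}\mapsto DB\mathbf{a}$ restricts to an injection $\Ker A\hookrightarrow\Ker\transp{A}$, and since $A$ and $\transp{A}$ share the same rank this injection is forced to be a bijection. The only delicate point is the index bookkeeping in the identification $\mathbf{q} = DB\mathbf{a}$ — in particular making sure that the convention $B_{ij}=\binom{2j}{2i}$ of Proposition~\ref{propsym} really matches the coefficient extraction performed after the substitution $X\leftarrow X+Y$ — but once that alignment is in place, the whole proposition reduces to the symmetry of $\transp{A}DB$ together with an elementary rank argument.
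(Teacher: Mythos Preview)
Your proof is correct and follows essentially the same approach as the paper: both identify $\mathbf{q}=DB\,\mathbf{a}$ via a binomial expansion of $P(X+Y,Y)$ and then invoke the symmetry of $\transp{A}DB$ from Proposition~\ref{propsym} to deduce $\Ker\transp{A}=DB\Ker A$. Your choice to write $P$ over the full range using the antisymmetry $a_i=-a_{(k-2)/2-i}$, together with the explicit rank argument for the reverse inclusion, are only minor presentational differences from the paper's slightly terser formulation.
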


\begin{proof}
We first compute the~$q_{r,k-r}$ in terms of the~$a_i$. We have
\begin{align*}
P(X+Y,Y) & =\sum_{i=1}^{[\frac{k-4}{4}]} a_i\,
\left((X+Y)^{2i}Y^{k-2-2i}-(X+Y)^{k-2-2i}Y^{2i}\right) \\ 
 & =\sum_{i=1}^{[\frac{k-4}{4}]} a_i\,\left(
\sum_{r=1}^{2i} \binom{2i}{r}\,X^rY^{k-2-r}
-\sum_{r=1}^{k-2-2i} \binom{k-2-2i}{r}\,X^rY^{k-2-r}\right) \\ 
 & =\sum_{i=1}^{[\frac{k-4}{4}]} a_i\left(
\sum_{r=1}^{k-2} \binom{2i}{r}\,X^rY^{k-r-2}
-\sum_{r=1}^{k-2} \binom{k-2-2i}{r}\,X^rY^{k-2-r}\right)
\end{align*}
since we can freely replace the upper limits on the~$r$-sums by larger ones, as the binomial coefficients will simply be equal to zero. Thus we can invert the order of the sums and write
\[P(X+Y,Y)=
\sum_{r=1}^{k-2}\sum_{i=1}^{[\frac{k-4}{4}]} a_i\left(
\binom{2i}{r}-\binom{k-2-2i}{r}\right)\,
X^rY^{k-r-2}\text{.}\]

We are interested in the coefficients~$\binom{k-2}{r-1}\,q_{r,k-r}$ of the monomials~$X^{r-1}Y^{k-r-1}$ where~$r-1$ is even, so we write~$r-1=2j$, and the coefficient of~$X^{2j}Y^{k-2-2j}$ is then given by
\[\binom{k-2}{2j}\,q_{2j+1,k-2j-2}=
\sum_{i=1}^{[\frac{k-4}{4}]}a_i\left(\binom{2i}{2j}
-\binom{k-2-2i}{2j}\right)\text{,}\qquad 1\le j\le \frac{k-4}{2}\text{.}\]

Now, note that since~$\transp ADB$ is symmetric by proposition~\ref{propsym}, we have~$\Ker\transp ADB=\Ker {}^tBDA=\Ker A$. Thus since~$D$ and~$B$ are both invertible, we have~$\Ker\transp A=DB\Ker A$. But by the definitions of~$D$ and~$B$, the~$j$-th component of the vector~$DB\transp(a_1,\ldots,-a_1)$ is exactly equal to~$q_{2j+1,k-2j-1}$ (indeed, the whole point of the matrix~$DB$ is to effect the variable change from the~$a_i$ to the~$q_{r,k-r}$). This concludes the proof.
\end{proof}

Since we saw above that the vectors in the kernel of~$\transp A$ provide the coefficients of the linear combinations of odd-component double zetas which are zero modulo~$Z(k)$, we recover the statement of theorem~\ref{thmgkz} (the~\cite{GKZ} result) in the formal zeta algebra~$\overline{\FZ}$, and thus also for real multizeta values by passage to the quotient, as an immediate corollary of these propositions.

\begin{coro}[\cite{GKZ} theorem]
Let~$k\ge 12$ be an even integer, let~$P(X,Y)=\sum_{i=1}^{[\frac{k-4}{4}]} a_i\,(X^{2i}Y^{k-2-2i}-X^{k-2-2i}Y^{2i})
\in E_k$ be~a homogeneous period polynomial of weight~$k$, and write
\[P(X+Y,Y)=\sum_{r=1}^{k-3} \binom{k-2}{r-1}\, q_{r,k-r}\,
X^{r-1}Y^{k-r-1}.\]
Then the linear combination
\[\sum_{\substack{r=3 \\ r\text{ odd}}}^{k-3} q_{r,k-r}\,Z(r,k-r)\]
is equal to~a scalar multiple of~$Z(k)$ in~$\overline{\FZ}$.
\end{coro}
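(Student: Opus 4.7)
The plan is to chain together results already established; the remark placed immediately before the statement says explicitly that the corollary follows ``as an immediate corollary of these propositions,'' so what remains is a three-step bookkeeping exercise.

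First, proposition~\ref{thmiso} applied to the hypothesis $P\in E_k$ produces a vector in $\Ker A$: the constructed isomorphism $E_k\simeq \Ker A$ sends $P$ to $\transp(a_1,\ldots,-a_2,-a_1)$, whose antisymmetry reflects the defining relation $P(X,Y)+X^{k-2}P(\tfrac1X,Y)=0$ of $E_k$. So this first step simply re-reads the period polynomial as a kernel vector.

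Next, the proposition immediately preceding the corollary expresses the coefficients $q_{r,k-r}$ of $P(X+Y,Y)$ as the image of the $a_i$-vector under the matrix $DB$, and combines the symmetry of $\transp ADB$ from proposition~\ref{propsym} with the invertibility of $D$ and $B$ to identify $\Ker\transp A = DB\cdot\Ker A$. Applying this to the vector of the previous paragraph yields $\transp(q_{3,k-3},q_{5,k-5},\ldots,q_{k-3,3})\in \Ker\transp A$.

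Finally, the earlier corollary identifies vectors of $\Ker\transp A$ with the $\mathbb{Q}$-linear relations among the odd-component double zetas $Z(r,k-r)$ viewed as generators of $\overline{\FZ}_k^2/\overline{\FZ}_k^1$. In weight $k$, the depth-one part $\overline{\FZ}_k^1$ is spanned by the single symbol $Z(k)$ (the only convergent word with one $y$ in weight $k$ is $x^{k-1}y$), so the relation coming from the vector constructed above reads exactly $\sum_{r\text{ odd},\,r\ge 3} q_{r,k-r}\,Z(r,k-r)\in \mathbb{Q}\cdot Z(k)$, which is the asserted scalar-multiple statement in $\overline{\FZ}$. The corresponding statement for real multizeta values follows by composing with the surjection $\overline{\FZ}\twoheadrightarrow$\,(real multizetas) from section~\ref{sectionUds}. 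There is no genuine obstacle to overcome here; the only point needing careful articulation is the direction of duality, since it is the transposition $A\leftrightarrow \transp A$ that converts the period-polynomial relations among Poisson brackets (theorem~\ref{thmschn}) into the double-zeta relations of theorem~\ref{thmgkz}, and one must keep straight that the $Z_i$ play the role of generators on the $\overline{\FZ}$-side while the $g_j$ play it on the $\Uds$-side.
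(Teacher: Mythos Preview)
Your proposal is correct and follows exactly the route the paper intends: the corollary is stated without proof precisely because it is the concatenation of proposition~\ref{thmiso} (to land in $\Ker A$), the preceding proposition (to pass via $DB$ to $\Ker\transp A$), and the earlier corollary (to interpret $\Ker\transp A$ as relations among the $Z(r,k-r)$ modulo $Z(k)$). The only cosmetic slip is your mixed-variable formula $P(X,Y)+X^{k-2}P(\tfrac1X,Y)=0$, which should read $P(X)+X^{k-2}P(\tfrac1X)=0$ in one variable or $P(X,Y)+P(Y,X)=0$ in homogenized form; this does not affect the argument.
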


\begin{rem}
 A variant of the matrix~$A$ can also be found in~\cite{Kan}, but the equivalence we proved above was not derived from it.
\end{rem}

\newpage
\vspace*{\stretch{1}}
\bibliographystyle{amsalpha}
\bibliography{bibdz}
\vspace*{\stretch{2}}

\end{document}